\newcommand{\bibname}{}
\newtheorem{thm}{Theorem}
\newtheorem{lem}{Lemma}
\theoremstyle{definition}
\newtheorem{dfn}{Definition}
\newtheorem{rem}{Remark}
\newcommand{\Mor}[2]{\ensuremath{\mbox{Cob}_{\Sigma^1}(#1,#2)}}
\newcommand{\tps}{\ensuremath{tp_{\Sigma^2}}}
\newcommand{\MorQ}{\ensuremath{\mbox{Mor}_{\Q}} }
\newcommand{\Prim}[2]{\ensuremath{\mbox{Prim}(#1,#2)}}
\newcommand{\defeq}{\stackrel{\mbox{\scriptsize def}}{=}}
\newcommand{\xih}{\tilde{\xi}}
\newcommand{\R}{\mathbb{R}}
\newcommand{\Z}{\mathbb{Z}}
\newcommand{\Q}{\mathbb{Q}}
\newcommand{\hu}[3]{\tilde{#1}_{#2}(#3)}
\newcommand{\Sr}[1]{\Sigma^{1_{#1}}}
\newcommand{\id}{\mbox{\rm{id}}}
\newcommand{\rank}{\mbox{rank }}
\newcommand{\ie}{i{.} e{.} }
\title{Multiplicative properties of Morin maps}
\author{G\'abor Lippner \and Andr\'as Sz\H ucs}
\begin{document}
\maketitle

\begin{abstract}
In the first part of the paper we construct a ring structure on the rational
cobordism classes of Morin maps (\ie smooth generic maps of corank 1). We
show that associating to a Morin map its $\Sr{r}$ (or $A_r$) singular strata
defines a ring homomorphism to $\Omega_* \otimes \Q$, the rational oriented
cobordism ring. This is proved by analyzing the multiple-point sets of a
product immersion. Using these homomorphisms we compute the ring of Morin maps.

In the second part of the paper we find the oriented Thom polynomial of the
$\Sigma^2$ singularity type with $\Q$ coefficients. Then we provide a product
formula for the $\Sigma^2$ and the $\Sigma^{1,1}$ singularities.

\end{abstract}

\section{Introduction}

The results of this paper are the first steps toward understanding how the
direct product operation affects the singularities of maps. There are two main
problems. The first one is that the direct product of generic maps will not be
generic, so one has to take a small perturbation. This makes it hard to
understand the singular strata geometrically. The second problem is that in
general the product of two singular maps even after a generic perturbation will
have more complicated singularities then the original maps had.

In Section~\ref{immerzioresz} we study products of immersions. Here only the
first type of problem arises, namely that the self intersections will not be
transverse. This can be overcome by employing a general multiple-point formula
from~\cite{BL} that helps to compute the characteristic numbers of
multiple-point manifolds.

In Section~\ref{morinresz} we study Morin maps. In this case one has to deal
with the second kind of problem. We get around this by increasing the dimension
of the target space by one.

In Section~\ref{morq} we set out to compute the ring \MorQ (the ring of
rational cobordism classes of Morin maps) defined at the end of
Section~\ref{morinresz}. First, in Section~\ref{homom}, combining the results
of the previous sections we show that the singular strata behave nicely under
the multiplication defined in Section~\ref{szorzas}. Then in
Section~\ref{direktosszeg} we show that this information is actually enough to
compute \MorQ.

Finally Section~\ref{szigma1} deals with general singular maps. We show that a
Cartan-type formula relates the $\Sigma^1$ points of two maps with the
$\Sigma^1$ points of their direct product. We compute the oriented
Thom polynomial of the $\Sigma^2$ singularity with $\Q$ coefficients. Finally
we derive a Cartan-type formula for the $\Sigma^2$ points as well.

\section{Products of immersions}~\label{immerzioresz}

We start this section by recalling some basic notions about multiple points and
the relevant results of \cite{BL}.

First we shall introduce a characteristic class $\beta$ that assigns
to any oriented vector bundle $\xi$ over $B$ an element
\[ \beta(\xi) = \prod_{i=1}^{\infty}(1+p_1(\xi)t_i+p_2(\xi)t_i^2+\dots)
\in H^*(B;\Q)[[t_1,t_2,\dots]] \] in the ring of formal power series
of the variables $t_i$ over the ring $H^*(B;\Q)$. (Here $p_i(\xi) \in
H^{4i}(B;\Q)$ is the $4i$-dimensional Pontrjagin class of
$\xi$). Since the Cartan formula holds for Pontrjagin classes modulo
2-torsion it follows that $\beta(\xi \oplus \eta) = \beta(\xi) \cdot
\beta(\eta)$. (We have got rid of all torsions by taking $\Q$
coefficients.) It is also easily seen that $\beta$ is natural, and
always has an inverse element. When $B$ is a manifold we shall
abbreviate $\beta(TB)$ by $\beta(B)$.

Now let $f: M^n \to N^{n+k}$ be a generic (i{.}e{.} selftransverse)
immersion between oriented manifolds. The manifolds and
the maps representing the \textit{r-fold points} of $f$ in the source
and the target respectively will be denoted by

\begin{eqnarray*} \phi_r(f)& : \hu{M}{r}{f} \to M,& \mbox{ and} \\
\psi_r(f)& : \hu{N}{r}{f} \to N.& \end{eqnarray*}

When the codimension of the map $k$ is even, these manifolds are equipped with
a natural orientation.  It is easy to see that the cobordism classes of these
manifolds depend only on the cobordism class of $f$. Our goal is to obtain
information about these cobordism classes. To this end we compute their
characteristic numbers.

Let us denote
\begin{eqnarray*}
m_r& = m_r(f)&= {\phi_r}(f)_! (\beta(\hu{M}{r}{f})),\\
n_r& = n_r(f)&= {\psi_r}(f)_! (\beta(\hu{N}{r}{f})).
\end{eqnarray*}

The reason for considering these elements is the following simple
observation. Evaluating each coefficient of $m_r$ on the fundamental
class of $M$ we get an element in $\Q[[t_1,t_2,\dots]]$. The
coefficients of this power series are exactly the Pontrjagin numbers
of $\hu{M}{r}{f}$.

The classes $m_r$ and $n_r$ are related by the equality:
\begin{equation}~\label{mf} m_r \cdot \beta(\nu_f) = f^* n_{r-1} - e(\nu_f)m_{r-1}
\end{equation}
 where $\nu_f$ is the normal bundle of $f$ and $e$ is the Euler
 class. This is a generalization of the well-known Herbert-Ronga
 formula (see the Main formula of~\cite{BL}).

We are going to apply this in the case when the target is a Euclidean space.
Then $f^*=0$ so (\ref{mf}) is simplified to $m_r \cdot \beta(\nu_f) = -
e(\nu_f)\cdot m_{r-1}$. Applying this recursively one gets that $m_r \cdot
\beta(\nu_f)^{r-1} = (-e(\nu_f))^{r-1} \cdot m_1$. But $m_1 = \beta(M)$ and
$\beta(M) \cdot \beta(\nu_f) = \beta(\R^n) = 1$, so we end up with
\[ m_r = (-e(\nu_f))^{r-1}\cdot \beta(M)^r.\]

Now we can state and prove the main result of this section.

\begin{thm}~\label{immszorz}
Let $g_i : M_i^{n_i} \to \R^{n_i+k_i}; (i=1,2)$ be generic immersions. Then we
have \begin{equation}~\label{szorzodik} \hu{M}{r}{g_1\times g_2} \sim
(-1)^{r-1}\hu{M}{r}{g_1} \times \hu{M}{r}{g_2}\end{equation} where $\sim$
stands for ``unoriented-cobordant''.

If both manifolds $M_i$ are oriented and both codimensions $k_i$ are even, then
the two sides of~\ref{szorzodik} are oriented cobordant.
\end{thm}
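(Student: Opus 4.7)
The plan is to apply the formula $m_r(f) = (-e(\nu_f))^{r-1}\beta(M)^r$ derived just before the theorem to the product map $f = g_1 \times g_2 \colon M_1 \times M_2 \to \R^{n_1+k_1+n_2+k_2}$. The key point is that every ingredient is multiplicative under products: $T(M_1 \times M_2) = \pi_1^*TM_1 \oplus \pi_2^*TM_2$ and $\nu_{g_1\times g_2} = \pi_1^*\nu_{g_1} \oplus \pi_2^*\nu_{g_2}$, so by the Cartan-type multiplicativity of $\beta$ noted in the excerpt, and by $e(\xi\oplus\eta)=e(\xi)e(\eta)$ for the oriented Euler class, both characteristic classes split. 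Substituting yields
\[
m_r(g_1\times g_2) = \bigl(-e(\nu_{g_1})\,e(\nu_{g_2})\bigr)^{r-1}\bigl(\beta(M_1)\beta(M_2)\bigr)^{r},
\]
while on the other hand $m_r(g_1)\cdot m_r(g_2) = (-e(\nu_{g_1}))^{r-1}(-e(\nu_{g_2}))^{r-1}\beta(M_1)^r\beta(M_2)^r$. The two factors of $(-1)^{r-1}$ combine to $1$, leaving exactly one stray sign, so
\[
m_r(g_1\times g_2) = (-1)^{r-1}\, m_r(g_1)\cdot m_r(g_2).
\]

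Next I would evaluate both sides on the fundamental class $[M_1\times M_2]$. By the observation preceding~(\ref{mf}), the coefficients of the left-hand side are the Pontrjagin numbers of $\hu{M}{r}{g_1\times g_2}$; by Künneth, the coefficients on the right are $(-1)^{r-1}$ times the Pontrjagin numbers of $\hu{M}{r}{g_1}\times \hu{M}{r}{g_2}$. In the oriented case ($k_i$ both even) Thom's theorem says that rational Pontrjagin numbers detect the class in $\Omega_*\otimes \Q$, which gives oriented cobordism as claimed. For the general unoriented statement I would repeat the whole argument with $\Z/2$ coefficients: replace $\beta$ by its Stiefel--Whitney analog $\prod_i(1+w_1 t_i + w_2 t_i^2+\cdots)$ and the Euler class by the top Stiefel--Whitney class, rederive $m_r = (-e)^{r-1}\beta(M)^r$ from the mod-$2$ version of the Herbert--Ronga formula, and then invoke Thom's theorem that Stiefel--Whitney numbers detect unoriented cobordism exactly.

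I expect the main obstacle to be bookkeeping rather than conceptual: one must verify that the splitting $\nu_{g_1\times g_2}=\pi_1^*\nu_{g_1}\oplus\pi_2^*\nu_{g_2}$ is compatible with the identification $\hu{M}{r}{g_1\times g_2}\cong \hu{M}{r}{g_1}\times\hu{M}{r}{g_2}$ at the level of the pushforwards $(\phi_r)_!$ and the Künneth isomorphism, so that the product $m_r(g_1)\cdot m_r(g_2)$ really encodes the Pontrjagin numbers of the product manifold; and one must track the sign convention carefully to land on $(-1)^{r-1}$ rather than $(-1)^{r}$ or $+1$. Once that is done, everything reduces to the recursive application of the Main formula of~\cite{BL} already carried out in the excerpt.
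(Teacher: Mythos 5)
Your proposal is correct and follows essentially the same route as the paper: substitute the product data into $m_r = (-e(\nu_f))^{r-1}\beta(M)^r$, use multiplicativity of $\beta$ and of the Euler class to get $m_r(g_1\times g_2) = (-1)^{r-1}\,m_r(g_1)\times m_r(g_2)$, evaluate on $[M_1\times M_2]$ to compare characteristic numbers, and rerun the argument with the Stiefel--Whitney analogue of $\beta$. One small correction to your logic: rational Pontrjagin numbers alone only determine the class in $\Omega_*\otimes\Q$, so the integral oriented conclusion of the theorem needs the Stiefel--Whitney numbers as well (the oriented cobordism class is determined by the Pontrjagin and Stiefel--Whitney numbers together); you do compute these, but you should invoke them for the oriented case too, exactly as the paper does, rather than only for the unoriented statement.
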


\begin{proof}
We will only consider the oriented case. The unoriented version is proved
exactly the same way, except that there is no need to study Pontrjagin classes.

Let $f = g_1 \times g_2$. Then

\begin{eqnarray*}
m_r(f) =&& (-e(\nu_f))^{r-1} \cdot \beta((M_1\times M_2))^r = \\
=&& (-e(\nu_{g_1}\times \nu_{g_2}))^{r-1} \cdot \beta(TM_1 \times TM_2)^r = \\
=&& (-1)^{r-1}\left((-e(\nu_{g_1})^{r-1}\cdot \beta(M_1)^r \right)\times
\left((-e(\nu_{g_2})^{r-1}\cdot \beta(M_2)^r \right) =
\\ =&&
(-1)^{r-1} m_r(g_1)\times m_r(g_2).
\end{eqnarray*}

The following equations are easily checked.

\begin{eqnarray*}
\langle \beta(\hu{M}{r}{f}),[\hu{M}{r}{f}]\rangle =&& \langle m_r(f),[M_1\times
M_2]\rangle = \langle m_r(g_1\times g_2),[M_1\times M_2]\rangle =\\=&&
(-1)^{r-1}\langle \beta(\hu{M}{r}{g_1},[\hu{M}{r}{g_1}]\rangle \cdot \langle
\beta(\hu{M}{r}{g_2},[\hu{M}{r}{g_2}]\rangle = \\ =&& (-1)^{r-1}\langle
\beta((\hu{M}{r}{g_1}\times \hu{M}{r}{g_2})),[\hu{M}{r}{g_1}\times
\hu{M}{r}{g_2}]\rangle
\end{eqnarray*}

We have obtained equality of two formal power series, so the corresponding
coefficients must be equal on the two sides. As the coefficients are the
Pontrjagin numbers of the manifolds involved, we get that the Pontrjagin
numbers of the two manifolds are all equal.

To finish the proof we have to repeat the whole argument using an analogous
 class instead of $\beta$, namely

\[ \beta'(\xi) = \prod_{i=1}^{\infty}(1+w_1(\xi)t_i^1+w_2(\xi)t_i^2+\dots)
\in H^*(B,\Z_2)[[t_1,t_2,\dots]]. \]

It is obvious that all the above hold for $\beta'$ as well. Thus not only the
Pontrjagin numbers, but also the Stiefel-Whitney numbers of the two manifolds
are equal. Since the oriented cobordism class is determined by these numbers,
the claim of the theorem follows.
\end{proof}

This result will no longer hold if we consider a general target space $N$.
However the Pontrjagin and Stiefel-Whitney numbers of the multiple-point
manifolds of $g_1 \times g_2$ are still expressible in terms of $g_1,g_2$ and
their multiple-point manifolds. This expression is particularly simple for the
double-point set.

First we need a simple result about the embedded manifold representing a vector
bundle's Euler class. Let $\xi \to B$ be a vector bundle over a manifold $B$.
Let $s: B \to \xi$ be a section transverse to the 0-section. Let us denote by
${\Delta_{\xi}}$ the submanifold in $B$ that is the inverse image of the
0-section by $s$, and let ${\delta_{\xi}} : {\Delta_{\xi}} \to B$ denote the
inclusion.

\begin{lem}
$\langle \beta({\Delta_{\xi}}),[{\Delta_{\xi}}]\rangle = \langle \beta(B) \cdot
\frac{e(\xi)}{\beta(\xi)}, [B]\rangle.$
\end{lem}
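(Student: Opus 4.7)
The plan is to turn the left-hand side into an integral over $B$ via push-pull and then recognize the Poincaré dual of $\Delta_\xi$ as $e(\xi)$.

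First I would identify the geometry of the embedding $\delta_\xi : \Delta_\xi \hookrightarrow B$. Because $\Delta_\xi$ is cut out transversely by the section $s$ of $\xi$, its normal bundle in $B$ is canonically isomorphic to $\xi|_{\Delta_\xi} = \delta_\xi^* \xi$. Splitting the tangent bundle along the embedding gives
\[ \delta_\xi^* TB \;\cong\; T\Delta_\xi \oplus \delta_\xi^* \xi. \]
Applying $\beta$ and using its multiplicativity and naturality (both established earlier), I get
\[ \delta_\xi^* \beta(B) \;=\; \beta(\Delta_\xi)\cdot \delta_\xi^* \beta(\xi), \]
and since $\beta(\xi)$ is invertible this rearranges to
\[ \beta(\Delta_\xi) \;=\; \delta_\xi^*\!\left(\frac{\beta(B)}{\beta(\xi)}\right). \]

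Next I would evaluate the characteristic number. Using the projection (push-pull) formula for the embedding $\delta_\xi$,
\[ \langle \beta(\Delta_\xi), [\Delta_\xi]\rangle \;=\; \Bigl\langle \delta_\xi^*\!\left(\tfrac{\beta(B)}{\beta(\xi)}\right),\, [\Delta_\xi]\Bigr\rangle \;=\; \Bigl\langle \tfrac{\beta(B)}{\beta(\xi)} \cdot (\delta_\xi)_!(1),\, [B]\Bigr\rangle. \]
Finally, since $\Delta_\xi$ is the zero locus of a section of $\xi$ transverse to the zero section, its Poincaré dual in $B$ is the Euler class, i.e.\ $(\delta_\xi)_!(1) = e(\xi)$. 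Substituting yields exactly $\langle \beta(B)\cdot e(\xi)/\beta(\xi), [B]\rangle$.

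I do not anticipate a real obstacle: the argument is a one-line computation once one accepts the standard tubular-neighborhood identification of the normal bundle and the projection formula. The only mild care needed is that the equality lives in the formal power series ring $H^*(B;\Q)[[t_1,t_2,\dots]]$, so one is implicitly asserting it coefficient by coefficient; and one must note that $\beta(\xi)$ starts with $1$, hence is a unit, so the quotient $\beta(B)/\beta(\xi)$ is legitimate.
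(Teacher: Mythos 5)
Your argument is correct and is essentially identical to the paper's: both identify the normal bundle of $\Delta_\xi$ as $\delta_\xi^*\xi$ via transversality, deduce $\beta(\Delta_\xi)=\delta_\xi^*(\beta(B)/\beta(\xi))$ from multiplicativity and naturality of $\beta$, and conclude with the projection formula together with $(\delta_\xi)_!(1)=e(\xi)$. No differences worth noting.
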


\begin{proof}
It suffices to show that  \[{\delta_{\xi}}_!(\beta({\Delta_{\xi}})) = \beta(B)
\cdot \frac{e(\xi)}{\beta(\xi)}.\]

By the construction of ${\Delta_{\xi}}$ we have the following pull-back
diagram:

\[
\begindc \obj(1,3){$\Delta_{\xi}$} \obj(3,3){$B$} \obj(1,1){$B$} \obj(3,1){$\xi$}
  \mor(1,3)(1,1){$\delta_{\xi}$} \mor(1,3)(3,3){$\delta_{\xi}$} \mor(1,1)(3,1){$s$}
  \mor(3,3)(3,1){$0$-section} \enddc
\]

Hence the normal bundle of ${\delta_{\xi}}$ is just the pull-back of the
normal-bundle of the 0-section. This latter is just $\xi$. Thus we have \[
T{\Delta_{\xi}} \oplus {\delta_{\xi}}^* \xi = {\delta_{\xi}}^* TB,\] which in
turn implies that \[ \beta({\Delta_{\xi}}) =
{\delta_{\xi}}^*\left(\frac{\beta(B)}{\beta(\xi)}\right).\] Applying the
push-forward to this equation gives the proof of the lemma, since $f_!(f^*x) =
f_!(1) \cdot x$ is well known and obviously ${\delta_{\xi}}_!(1) = e(\xi)$ .
\end{proof}

\begin{thm}~\label{2point}
Let $g_i : M_i^{n_i} \to N_i^{n_i+k_i}; (i=1,2)$ be generic immersions. Then
\[ \hu{M}{2}{g_1\times g_2} \sim \hu{M}{2}{g_1} \times
\hu{M}{2}{g_2} + \hu{M}{2}{g_1} \times \Delta_{\nu_{g_2}} + \Delta_{\nu_{g_1}}
\times \hu{M}{2}{g_2}\] where $\sim$ stands for ``unoriented-cobordant''.
(Recall that $\nu_{g_i}$ is the normal bundle of $g_i$ and $\Delta_{g_i}$ is
the zero set of a generic section of $\nu_{g_i}$.) If the $M_i$ are oriented
and the $k_i$ are even, then the same is true up to oriented cobordism.
\end{thm}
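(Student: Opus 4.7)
The plan is to follow the template of Theorem~\ref{immszorz}, but now in the Herbert-Ronga formula~(\ref{mf}) the term $f^* n_{r-1}$ no longer vanishes: the extra contributions it produces will be exactly what gives rise to the two mixed summands $\hu{M}{2}{g_1} \times \Delta_{\nu_{g_2}}$ and $\Delta_{\nu_{g_1}} \times \hu{M}{2}{g_2}$. As in Theorem~\ref{immszorz}, it suffices to show that all Pontrjagin and Stiefel-Whitney numbers of the two sides of the claim agree, working first with $\beta$ and then repeating the argument with $\beta'$.

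Set $f = g_1 \times g_2$, so $\nu_f = \nu_{g_1} \times \nu_{g_2}$, and the classes $e(\nu_f)$, $\beta(\nu_f)$, $\beta(M_1\times M_2)$ each split as external products. Applying~(\ref{mf}) with $r=2$ to $f$ gives
\[ m_2(f)\cdot\beta(\nu_f) = f^* n_1(f) - e(\nu_f)\beta(M_1 \times M_2). \]
By definition $n_1(g) = g_!(\beta(M))$, and since both $\beta$ and push-forward respect external products, $n_1(f) = n_1(g_1)\times n_1(g_2)$ and hence $f^*n_1(f) = g_1^*n_1(g_1)\times g_2^*n_1(g_2)$. Applying~(\ref{mf}) separately to each $g_i$ gives $g_i^*n_1(g_i) = m_2(g_i)\beta(\nu_{g_i}) + e(\nu_{g_i})\beta(M_i)$. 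Substituting these, expanding the external product, and subtracting $e(\nu_f)\beta(M_1\times M_2) = e(\nu_{g_1})e(\nu_{g_2})\beta(M_1)\beta(M_2)$ cancels one of the four cross terms; dividing by the invertible class $\beta(\nu_f)$ then leaves
\[ m_2(f) = m_2(g_1)\,m_2(g_2) + m_2(g_1)\cdot\frac{e(\nu_{g_2})\beta(M_2)}{\beta(\nu_{g_2})} + \frac{e(\nu_{g_1})\beta(M_1)}{\beta(\nu_{g_1})}\cdot m_2(g_2). \]

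I would then invoke the preceding lemma, whose proof in fact shows $(\delta_{\nu_{g_i}})_!(\beta(\Delta_{\nu_{g_i}})) = \beta(M_i)\cdot e(\nu_{g_i})/\beta(\nu_{g_i})$, to identify each quotient term as a push-forward of $\beta$ from $\Delta_{\nu_{g_i}}$. Pairing the displayed identity with $[M_1\times M_2]$ and applying the projection formula, each summand on the right becomes the pairing of $\beta$ with the fundamental class of one of the three product manifolds appearing in the theorem; since such pairings encode exactly the Pontrjagin numbers of the underlying manifolds, all Pontrjagin numbers of the two sides of the claim agree. Re-running the argument with $\beta'$ in place of $\beta$ yields the corresponding equality for Stiefel-Whitney numbers, and these together determine the oriented (respectively unoriented) cobordism class.

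The main obstacle is the bookkeeping needed to confirm that $n_1$ and $f^*$ behave multiplicatively under the external product so that $g_i^*n_1(g_i)$ may be substituted factor by factor; this reduces to K\"unneth-type compatibility of Gysin maps with external products together with the multiplicativity of $\beta$ recalled at the start of this section. Once that is in place, the lemma supplies the geometric interpretation of $e(\nu_{g_i})\beta(M_i)/\beta(\nu_{g_i})$ needed to recognize $\Delta_{\nu_{g_i}}$, and the three-term expression for $m_2(f)$ matches the three pieces appearing on the right of the theorem.
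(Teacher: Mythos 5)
Your proposal is correct and follows essentially the same route as the paper's own proof: apply the Herbert--Ronga formula~(\ref{mf}) with $r=2$ to $f=g_1\times g_2$, factor $f^*n_1(f)$ as an external product, substitute the formula for each $g_i^*n_1(g_i)$, cancel the cross term against $e(\nu_f)\beta(M_1\times M_2)$, divide by the invertible class $\beta(\nu_f)$, and identify the quotient terms via the preceding lemma before evaluating on the fundamental class and repeating with $\beta'$. No substantive differences.
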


\begin{proof}
We proceed in a similar manner as in the previous theorem. Let us put $f =
g_1\times g_2$ and $M = M_1\times M_2$ again. Then using (\ref{mf}) we get

\begin{eqnarray*}
\beta(\nu_f)\cdot m_2(f) = && f^*f_!(\beta(M)) - e(\nu_f)\cdot \beta(M) = \\
=&& g_1^*{g_1}_!(\beta(M_1))\times g_2^*{g_2}_!(\beta(M_2))- e(\nu_f)\cdot
\beta(M) = \\ =&& \left(\beta(\nu_{g_1})m_2(g_1) + e(\nu_{g_1})\cdot
\beta(M_1)\right) \times \left(\beta(\nu_{g_2})m_2(g_2) + e(\nu_{g_2})\cdot
\beta(M_2)\right) -\\&&-e(\nu_f)\cdot \beta(M) =\\=&& \beta(\nu_f)\cdot
\left(m_2(g_1)\times m_2(g_2) + m_2(g_1) \times
\beta(M_2)\frac{e_{\nu_{g_2}}}{\beta(\nu_{g_2})} +
\beta(M_1)\frac{e_{\nu_{g_1}}}{\beta(\nu_{g_1})} \times m_2(g_2) \right)
\end{eqnarray*}

Now we can divide by $\beta(\nu_f)$ as it is an invertible element. We evaluate
both sides on $[M] = [M_1] \times [M_2]$. Finally we have to apply the previous
lemma to get that all the corresponding characteristic numbers are equal for
the two manifolds in question. As before, we can repeat the argument for
Stiefel-Whitney numbers in $\Z_2$ coefficients and Pontrjagin numbers in $\Q$
coefficients, so we get both parts of the theorem at the same time.
\end{proof}

\begin{rem}

\begin{enumerate}
\item It is possible to carry out similar calculations for triple points or
points of higher (say $r$) multiplicity. But the number of terms involved in
these formulas grow exponentially with $r$ and the authors did not manage to
find a nice way to write them down, not even recursively.

\item It would be possible to obtain similar formulas not only for the
cobordism classes of the underlying multiple-point manifolds, but for the
cobordism classes of the immersions $\phi_r$ themselves. To do this one would
need to consider the characteristic numbers of these immersions instead of the
characteristic numbers of the manifolds. These calculations are more or less
the same as the ones described here, but they are harder to keep track of.

\item It seems that the same results could be obtained using techniques of Eccles
and Grant from~\cite{EG}.

\item We would like to point out that Theorem~\ref{2point} is a non-trivial
generalisation of the oriented case of Theorem A in \cite{Byun}, which
considers the case of $n=k$.
\end{enumerate}
\end{rem}

\section{Ring structure of Morin maps}~\label{morinresz}

Given a smooth map $f : M \to N$, a point $x \in M$ is said to be a
$\Sigma^{i}$ point if the corank of $df_x : T_xM \to T_{f(x)}N$ is at least
$i$. The set of such points is denoted by $\Sigma^{i}(f)$. If $i_1 \geq i_2$
then we can define $\Sigma^{i_1,i_2}f = \Sigma^{i_2}f|_{\Sigma^{i_1}f}$. This
method can be continued recursively to give the definition of
$\Sigma^{(i_1,i_2,\dots,i_r)}$ points, where $i_1 \geq i_2 \geq \dots \geq
i_r$. This classification of singular points is called the Thom-Boardman type.
For details see e{.} g{.}~\cite{AGLV}.

A generic smooth map $f: M \to N$ is called a \textit{Morin} map if it has no
$\Sigma^2$ points. The singularities of such maps are classified by their
Thom-Boardman type, which can only be $\Sigma^{\overbrace{(1,1,\dots,1)}^r} =
\Sr{r}$ for some $r \geq 0$. (In the notation of~\cite{AGLV} this is $A_r$.)

Cobordism of Morin maps is defined in the usual way: two Morin maps $f : M_1^n
\to N^{n+k}$ and $g : M_2^n \to N^{n+k}$ are said to be cobordant if there is a
Morin map $H : W^{n+1} \to N^{n+k} \times [0,1]$ such that $\delta W = M_1 \cup
M_2$ and $H|_{M_1} = f, H|_{M_2} = g.$

Let us consider the set of cobordism classes of all Morin maps to Euclidean
spaces (for all nonnegative dimensions and all positive codimensions). This set
is a commutative group with addition induced by the disjoint union of maps. We
can take tensor product with $\Q$ to obtain the rational cobordism group whose
elements will be referred to as rational cobordism classes. In this section we
endow this rational cobordism group with a ring structure. Further we will show
that the singularities can be used to define ring homomorphisms into
$\Omega_*$, the oriented cobordism ring of manifolds.

The main tool in constructing the multiplication will be the so-called
``prim maps'', while the ring homomorphisms will be derived from the
results of the previous section.

\subsection{Prim maps}

\begin{dfn} A generic map $f: M \to N$ is called prim (\textit{pr}ojected
\textit{im}mersion) if it can be lifted to a generic immersion, $\tilde{f} : M
\to N \times \R$. (We will always denote the lifting by a tilde.)
\end{dfn}

Cobordism of prim maps can be defined in a natural way (the cobordism itself
should be a prim map into $N \times [0,1]$), and disjoint union induces a group
operation on the cobordism classes. The class of a prim map $f$ will be denoted
by $[f]$. (For details see e.g.~\cite{Sz2}.)

Clearly a prim map is neccessarily a Morin map.  Prim maps provide a good link
between immersions and Morin maps. We shall first define multiplication of prim
maps (using their liftings to immersions) and then show how to extend it to
multiplication of Morin maps (using results from~\cite{Sz3}). We will only work
with prim maps whose target space is Euclidean.

Let us denote $l_0 : pt \hookrightarrow \R$ the inclusion of a point into the
line.

\begin{lem}~\label{2vetulet}

a) Any two generic hyperplane projections of an immersion
  represent the same prim cobordism class.

b) Projections of cobordant immersions represent the same prim
cobordism class.
\end{lem}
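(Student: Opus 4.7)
The plan is to treat both parts by the same device: given a (cobordism of) generic immersion(s) in the ambient space $\R^{n+k+1}$ together with two generic hyperplane projections specified as boundary data, I will construct the desired prim cobordism by smoothly rotating the projection direction across the cobordism parameter $t \in [0,1]$.

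For part (a), let $\tilde{f}: M \hookrightarrow \R^{n+k+1}$ denote the given generic immersion, and write the two hyperplane projections as $\pi_{v_0}, \pi_{v_1}$, where $v_i \in S^{n+k}$ is a unit normal to the target hyperplane. First I would choose a smooth path $v_t$ in $S^{n+k}$ joining $v_0$ to $v_1$, constant in a collar neighbourhood of each endpoint. Next, form the trivial immersion-cobordism $\tilde{F}: M \times [0,1] \to \R^{n+k+1} \times [0,1]$, $(x,t) \mapsto (\tilde{f}(x), t)$, and consider the fiberwise-linear diffeomorphism $\Phi: \R^{n+k+1} \times [0,1] \to \R^{n+k} \times [0,1] \times \R$ given by $(y,t) \mapsto (\pi_{v_t}(y), t, \langle y, v_t\rangle)$. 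Then $\tilde{H} := \Phi \circ \tilde{F}$ is a generic immersion of $M \times [0,1]$, and its projection onto $\R^{n+k} \times [0,1]$ restricts to $f_0$ at $t=0$ and to $f_1$ at $t=1$ (by the collar choice), giving the required prim cobordism with lift $\tilde{H}$.

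For part (b), I would run the same argument with the trivial cobordism $\tilde{F}$ replaced by a given cobordism of immersions $\tilde{G}: W \to \R^{n+k+1} \times [0,1]$ connecting $\tilde{f}_0$ and $\tilde{f}_1$; applying the same $\Phi$ yields the lifted prim cobordism $\tilde{H} := \Phi \circ \tilde{G}$, whose projection is the required prim cobordism between $f_0$ and $f_1$.

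The main obstacle, I expect, is genericity: one needs $\tilde{H}$ to be self-transverse and the projected map $H$ to be a generic Morin map. Self-transversality of $\tilde{H}$ is automatic since $\Phi$ is a diffeomorphism of the target, so it reduces to self-transversality of $\tilde{F}$ (resp.\ $\tilde{G}$), which is given. For $H$ to be a bona fide prim (hence Morin) map, the path $v_t$ must avoid, at each $t$, a subset of $S^{n+k}$ of positive codimension, namely those directions producing a non-generic slice-wise projection. This will be handled by a standard jet-transversality argument, which says that generic one-parameter families of directions with the prescribed endpoints miss this bad locus, so any such path can be used.
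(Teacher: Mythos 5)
Your argument is correct and is essentially the paper's: the paper realizes the change of projection direction as a rotation of the immersion (a regular homotopy) and then takes a generic projection of that homotopy, which is the same construction as your rotating family of hyperplanes $v_t$ applied to the trivial (resp.\ given) cobordism of immersions, up to a change of coordinates in the target. The only cosmetic difference is in the genericity bookkeeping --- the paper perturbs the target hyperplane and invokes openness of generic projections, while you invoke genericity of the path $v_t$ (where, strictly, you only need the total map of $M\times[0,1]$ to be generic, not each slice-wise projection).
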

\begin{proof}
a) Instead of taking two projections of the same immersion we can take
the same projection of two immersions which differ only by a
rotation. This rotation can be realized by a regular homotopy. We can
take a generic projection of this homotopy to a hyperplane that is
sufficiently close to the original one. This gives a prim cobordism
between slightly perturbed versions of the original prim maps, but
since generic projections form an open set this perturbation does not
effect the prim cobordism class (not even the prim homotopy class).
b) This can be proved in exactly the same way, by taking a generic
projection of the cobordism connecting the two immersions.
\end{proof}

\begin{dfn}~\label{primekszorzasa}
Given two prim maps $f_i : M_i \to \R^{n_i}\ (i=1,2)$ consider the product map
  \[g = f_1 \times f_2 \times l_0 : M_1 \times M_2 \to \R^{n_1+n_2}
  \times \R.\] The map $g$ might not yet be prim, but we can turn it
  into such by a small perturbation. Take liftings
  $\tilde{f_1}$ and $\tilde{f_2}$ that are sufficiently close to
  $f_1 \times l_0$ and $f_2 \times l_0$. Now $\tilde{f_1} \times
  \tilde{f_2} : M_1 \times M_2 \to \R^{n_1+n_2} \times \R^2$ is a
  non-generic immersion. Let us take a sufficiently small perturbation
  of this product so that it becomes a generic immersion. Finally take
  a generic projection this immersion to a hyperplane ``close'' to
  $\R^{n_1+n_2} \times \R$, where the last $\R$ factor is the diagonal
  in $\R^2$. We obviously get a prim map $g'$ that can be arbitrarily
  close to $g$. Let us denote $g' = f_1 * f_2$ and let us define the multiplication on prim cobordism classes as follows: $[f_1] * [f_2] =
  [f_1 * f_2]$.
\end{dfn}

\begin{thm} The above definition is correct, that is $[f_1 * f_2]$ is
  independent of the choice of $f_1$ and $f_2$ within their cobordism
  class and of any other choices made in the definition. The
  multiplication defined in this way gives rise to a ring structure
  with respect to the disjoint union as additon.
\end{thm}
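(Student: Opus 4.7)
The plan is to reduce every ambiguity in Definition~\ref{primekszorzasa} to Lemma~\ref{2vetulet} and then check the ring axioms one by one. There are four layers of choices to examine: the choice of final hyperplane, the choice of generic perturbation of the non-generic product immersion, the choice of liftings $\tilde{f_1},\tilde{f_2}$, and finally the choice of representatives $f_1,f_2$ within their cobordism classes.

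First I would dispose of the choices made when the liftings are fixed. Once $\tilde{f_1}$ and $\tilde{f_2}$ are chosen sufficiently close to $f_i\times l_0$, any two generic perturbations of the non-generic immersion $\tilde{f_1}\times \tilde{f_2}$ are regularly homotopic through arbitrarily small perturbations, so part (b) of Lemma~\ref{2vetulet} shows that the resulting prim maps are prim cobordant; and part (a) handles the choice of the final hyperplane. Next, any two liftings of $f_i$ that both lie close to $f_i\times l_0$ can be joined by a regular homotopy through such liftings (the space of liftings is locally convex), hence the induced products are regularly homotopic and the resulting prim projections are again prim cobordant. For representative independence, let $H\colon W\to \R^{n_1}\times [0,1]$ be a prim cobordism between $f_1$ and $f_1'$, lift it to a generic immersion $\tilde H\colon W\to \R^{n_1}\times[0,1]\times\R$, and form $\tilde H\times \tilde{f_2}$; a generic perturbation followed by a generic projection to a hyperplane close to $\R^{n_1+n_2+1}\times[0,1]$ gives a prim cobordism between $f_1*f_2$ and $f_1'*f_2$. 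Symmetry in the two factors then finishes the well-definedness.

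For the ring axioms, disjoint union has already been shown to give an abelian group. Distributivity is immediate since $(f_1\sqcup f_1')\times f_2=(f_1\times f_2)\sqcup(f_1'\times f_2)$ and every perturbation and projection in the definition can be performed componentwise. Commutativity follows from a coordinate swap, which is a prim homotopy. For associativity, both $(f_1*f_2)*f_3$ and $f_1*(f_2*f_3)$ can be realised as generic projections to hyperplanes of $\R^{n_1+n_2+n_3+3}$ of a single generic perturbation of $\tilde{f_1}\times\tilde{f_2}\times\tilde{f_3}$; the target hyperplanes are arbitrarily close to the same codimension-one subspace obtained by collapsing the three extra $\R$ factors along their ``total diagonal,'' so Lemma~\ref{2vetulet}(a) identifies the two prim cobordism classes.

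The main obstacle is the associativity step. The two bracketings are not by construction projections of the same immersion: in each, one has to pass through an intermediate prim map, re-lift it, multiply by the third factor, and perturb. The work is to check that this two-stage perturbation process can be realised, up to regular homotopy of immersions, by a single generic perturbation of $\tilde{f_1}\times\tilde{f_2}\times\tilde{f_3}$, so that both bracketings become generic projections of one common immersion and Lemma~\ref{2vetulet}(a) applies. Once this is verified, the rest of the argument is essentially a bookkeeping exercise tracking which $\R$ factor gets collapsed at which stage.
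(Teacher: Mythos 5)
Your well-definedness argument is essentially the paper's: the choices of perturbation and of projecting hyperplane are absorbed by Lemma~\ref{2vetulet}, the liftings are unique up to regular homotopy, and independence of the representatives is obtained exactly as in the paper by lifting a prim cobordism $H$ between $f_1$ and $f_1'$ to an immersed cobordism $\tilde{H}$, forming $\tilde{H}\times\tilde{f_2}$, and projecting. Where you diverge is on the ring axioms: the paper disposes of them with the single remark that only distributivity needs checking and that it is obvious, whereas you correctly observe that associativity is the one axiom that is not formally immediate, since $(f_1*f_2)*f_3$ and $f_1*(f_2*f_3)$ are built through different intermediate liftings and perturbations and are therefore not by construction projections of one common immersion. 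Your proposed remedy --- showing that both bracketings arise as generic projections of a single small perturbation of $\tilde{f_1}\times\tilde{f_2}\times\tilde{f_3}$ onto hyperplanes near the same ``total diagonal'' subspace, and then applying Lemma~\ref{2vetulet}(a) --- is the natural way to supply this, and the key point (that re-lifting the intermediate prim map $f_1*f_2$ recovers, up to regular homotopy, the perturbed immersion $\tilde{f_1}\times\tilde{f_2}$ one started from, again by uniqueness of liftings) goes through; you flag this verification rather than carrying it out, but your account is if anything more complete than the paper's own.
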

\begin{proof} The liftings are uniqe up to regular homotopy. Also the
  perturbation of $\tilde{f_1} \times \tilde{f_2}$ is uniqe up to
  regular homotopy. Thus Lemma~\ref{2vetulet} implies that the
  resulting prim map is independent of these choices.

Now suppose $[f_1] = [g_1]$. Then there is a prim cobordism $H$
joining $f_1$ and $g_1$. We can take its lifting $\tilde{H}$ which is an
immersed cobordism between $\tilde{f_1}$ and $\tilde{g_1}$, and so
$\tilde{f_1} \times \tilde{f_2}$ and $\tilde{g_1} \times \tilde{f_2}$
are regularly homotopic via $\tilde{H} \times \tilde{f_2}$. So their
projections are prim cobordant, and this is what we wanted to
prove. (The definition is symmetric so the other factor can be handled
the same way.)

The last claim only requires the checking of distributivity, which is
obvious.
\end{proof}

\subsection{Morin maps}~\label{szorzas}

In this section we only consider maps between oriented manifolds. Let us denote
the group of cobordism classes of oriented Morin maps $f: M^n \to \R^{n+k}$ by
\Mor{n}{k} and the cobordism classes of prim maps $f: M^n \to \R^{n+k}$ by
\Prim{n}{k}. As a prim map is automatically Morin and prim cobordant maps are
Morin cobordant as well, we have a natural forgetting map $F : \Prim{n}{k} \to
\Mor{n}{k}$, that induces a map $F_{\Q} : \Prim{n}{k} \otimes \Q \to \Mor{n}{k}
\otimes \Q$. The following key result, which says that every Morin map has a
non-zero multiple that is Morin-cobordant to a prim map is proved
in~\cite{Sz3}:

\begin{lem}~\label{epi} The map $F_{\Q}$ is epimorphic.
\end{lem}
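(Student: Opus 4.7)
The plan is to follow the classifying-space method developed by Sz\H ucs in \cite{Sz3}. By the general theory of cobordism of singular maps, both rational cobordism groups $\Prim{n}{k} \otimes \Q$ and $\Mor{n}{k} \otimes \Q$ arise as stable rational homotopy groups of classifying spaces, and the forgetful map $F_\Q$ is induced by a natural continuous map between them. It therefore suffices to show that this classifying map is surjective on rational (stable) homotopy groups.

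First, I would exploit the fact that both classifying spaces carry compatible filtrations by the maximal Thom-Boardman complexity $\Sr{r}$ allowed, and that the list of allowed singularities is the same on both sides (a prim map is Morin, and conversely Morin maps have only $\Sr{r}$-singularities). The successive quotients of these filtrations are Thom spaces of ``virtual normal bundles'' over the classifying spaces of the maximal compact symmetry groups of the respective local models. This reduces the problem to a stratum-by-stratum comparison.

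Second, I would compare the two sides stratum by stratum. The only real difference is that on the prim side one additionally records the data of a hyperplane projection of an immersion (equivalently, a trivialization of the kernel line bundle of the lift), so the symmetry group in the prim setting is a circle extension of the symmetry group in the Morin setting. This extension is, rationally, an oriented $S^1$-bundle whose Euler class is nonzero, because the extra $S^1$ acts with nonzero weight on the kernel direction of the lifted immersion. Consequently the Gysin (integration-over-the-fiber) map between the two Thom spaces is rationally surjective on the associated graded pieces of the filtration, and a standard spectral sequence comparison argument then yields the surjectivity of $F_\Q$ on the cobordism groups themselves.

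The principal obstacle is the stratum-by-stratum verification that the Euler class of the circle extension is indeed nonzero rationally for every $r$; this is the technical heart of \cite{Sz3} and rests on a careful identification of the maximal compact symmetry groups of the $\Sr{r}$-singularities in the prim versus the Morin setting, together with the explicit description of the $S^1$-weight on the kernel direction arising from the one-dimensional lift.
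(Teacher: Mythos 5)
First, a point of comparison: the paper does not prove this lemma at all. It is stated as a quoted result (``every Morin map has a non-zero multiple that is Morin-cobordant to a prim map'') with an explicit reference to~\cite{Sz3}, so there is no in-paper argument to match your sketch against. Your proposal is in the same spirit as the machinery of~\cite{Sz3} (classifying spaces for cobordism of singular maps, filtration by the top Thom--Boardman stratum, key fibrations whose fibres are Thom spaces of the universal bundles $\xih_\eta$ over $BG_\eta$), and that general framework is the right one. But your proposal also ultimately defers ``the technical heart'' back to~\cite{Sz3}, so as written it is a plan rather than a proof.

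The one substantive mathematical claim you do make --- that the prim symmetry group of the $\Sr{r}$ (i.e.\ $A_r$) singularity is a \emph{circle extension} of the Morin one, so that surjectivity follows from a Gysin argument for an $S^1$-bundle with nonzero rational Euler class --- is not correct, and it has the containment backwards. A prim structure is \emph{extra} data on a Morin map (a lift to an immersion, equivalently a trivialization, hence in particular an orientation, of the kernel line bundle of $df$ along the singular set), so the maximal compact symmetry group in the prim setting is a \emph{subgroup} of the one in the Morin setting, not an extension of it; concretely, for $A_r$ the Morin group is of the form $\Z_2\times O(k)$ with the $\Z_2$ acting by $-1$ on the kernel coordinate, and the prim group is the index-two subgroup preserving the orientation of that line. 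There is no circle anywhere, and a Gysin sequence for an $S^1$-bundle is not the relevant tool. The stratum-wise rational surjectivity instead comes from a finite-covering (transfer) argument: for an index-two inclusion $G'\subset G$ the map $H_*(BG';\Q)\to H_*(BG;\Q)$ is onto because the composite with the transfer is multiplication by $2$. This is exactly what produces the ``non-zero multiple'' in the statement as quoted from~\cite{Sz3}. So you should replace the circle-bundle/Euler-class step by the $\Z_2$-cover/transfer step; with that correction (and with the filtration comparison actually carried out, rather than cited), your outline becomes a viable route to the lemma.
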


Using this result and the construction in the previous section we can
now define a multiplication on $\left(\bigoplus_{n,k} \Mor{n}{k}\right) \otimes \Q$.

\begin{dfn}~\label{morinekszorzasa} Let us take two Morin maps $g_i :
  M_i^{n_i} \to \R^{n_i+k_i}$. By Lemma~\ref{epi} we can find prim
  maps $f_1$ and $f_2$ that are rationally Morin cobordant to $g_1$
  and $g_2$. Let us define $[g_1] * [g_2] \defeq [F_{\Q}(f_1 * f_2)]$,
  where $[f]$ denotes the rational Morin cobordism class of the Morin
  map $f$.
\end{dfn}

\begin{thm} The above definition is correct, that is $[g_1] * [g_2]$
  is independent of the choices made. The multiplication defined this
  way gives rise to a ring structure on $\left(\bigoplus_{n,k} \Mor{n}{k}\right) \otimes \Q$.
\end{thm}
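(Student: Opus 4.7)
The plan is to reduce both claims of the theorem to the structural fact that $F_\Q:\bigoplus_{n,k}\Prim{n}{k}\otimes\Q\to\bigoplus_{n,k}\Mor{n}{k}\otimes\Q$ is an isomorphism of $\Q$-vector spaces. Surjectivity is Lemma~\ref{epi}, so the remaining task is injectivity. I would prove this by establishing a relative version of Lemma~\ref{epi}: a Morin cobordism whose boundary maps are prim can, after multiplication by some nonzero integer, be replaced rel boundary by a prim cobordism. Granting this, if $[f]\in\bigoplus\Prim{n}{k}\otimes\Q$ satisfies $F_\Q([f])=0$, then some $N\cdot f$ admits a Morin nullcobordism $H$; the relative lemma upgrades $H$ (after a further multiplication by some $N'$) to a prim nullcobordism of $NN'\cdot f$, whence $[f]=0$ in $\bigoplus\Prim{n}{k}\otimes\Q$.

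Once $F_\Q$ is known to be a bijection, both assertions of the theorem become formal. For correctness of the definition: given Morin maps $g_1,g_2$, the prim rational class $[f_i]:=F_\Q^{-1}([g_i])$ is uniquely determined, and any two prim maps representing $[f_i]$ yield the same $*$-product in $\bigoplus\Prim{n}{k}\otimes\Q$ by the previous subsection; therefore $[g_1]*[g_2]=F_\Q([f_1]*[f_2])$ depends only on $[g_1]$ and $[g_2]$. The ring axioms (associativity, commutativity, and bilinearity, hence distributivity with respect to disjoint union) then transport along the isomorphism $F_\Q$ from the rational prim ring constructed in the previous subsection.

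The main obstacle is the relative version of Lemma~\ref{epi}. I expect the proof of the absolute version in \cite{Sz3} to go through essentially unchanged after fixing a prim collar near the boundary of the cobordism $W$ and running the constructions there parametrically in the cobordism direction, without disturbing the collar. Should that approach fail, it would suffice to prove only the weaker statement that $\ker F_\Q$ is a two-sided ideal of $\bigoplus\Prim{n}{k}\otimes\Q$: in view of the identity
\[
[f_1]*[f_2]-[f_1']*[f_2']=([f_1]-[f_1'])*[f_2']+[f_1]*([f_2]-[f_2']),
\]
this already gives well-definedness, and it requires only producing a Morin (rather than prim) nullcobordism of some multiple of $h*g$ whenever $h$ is Morin-nullcobordant and $g$ is prim, which is a strictly weaker relative assertion.
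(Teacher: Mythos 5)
Your primary route rests on a claim that is not merely unproven but false: a ``relative Lemma~\ref{epi}'' of the kind you describe would make $F_{\Q}$ injective, and it is not. The paper only ever asserts surjectivity, and its own computations show why injectivity must fail: rationally, $\Prim{n}{k}\cong\mbox{Imm}^{SO}(n,k+1)$ via lifting/projecting, so $\Prim{n}{k}\otimes\Q\cong H_n(BSO(k+1);\Q)$, whereas by Lemma~\ref{szucs} $\Mor{n}{k}\otimes\Q\cong\bigoplus_i H_{n-2i(k+1)}(BSO(k);\Q)$; for odd $k$ the former carries the extra Euler-class generator of $H^{k+1}(BSO(k+1);\Q)$. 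Concretely, $\Prim{2}{1}\otimes\Q\cong H_2(BSO(2);\Q)=\Q$ while $\Mor{2}{1}\otimes\Q=0$, so $\ker F_{\Q}\neq 0$ already here. (Geometrically: all multiple-point manifolds of the even-codimensional lifting are orientable and detected rationally, but via Theorem~\ref{szigmadelta} the odd strata $\Sigma^{1_r}$, $r$ odd, of the Morin map contribute nothing rationally.) So the branch of your argument that deduces everything from $F_{\Q}$ being an isomorphism collapses.

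Your fallback --- showing $\ker F_{\Q}$ is an ideal, equivalently that a Morin nullcobordism of (a multiple of) $h$ yields a Morin nullcobordism of (a multiple of) $h*g$ for $g$ prim --- is exactly the paper's strategy, and your algebraic reduction to it is fine. But you stop precisely where the only real work lies, dismissing it as ``a strictly weaker relative assertion'' without proof. The missing content is a single geometric observation: if $H$ is a Morin cobordism between prim representatives $f_1$ and $f_1'$, then $H\times\tilde{f_2}$, the product with the \emph{immersion lifting} of $f_2$, is again Morin after a small generic perturbation, because $\ker d(H\times\tilde{f_2})=\ker dH\oplus\ker d\tilde{f_2}=\ker dH$ has dimension at most $1$, and having no $\Sigma^2$ points is an open condition so the perturbation preserves it; this Morin cobordism connects (maps Morin-cobordant to) $f_1*f_2$ and $f_1'*f_2$. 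This one step is the entire content of the paper's proof, and your proposal never supplies it, so as written there is a genuine gap at the proposal's only nontrivial point.
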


\begin{proof} There is only one thing left that needs to be checked:
  if $f_1$ and $f_1'$ are Morin cobordant prim-representatives of
  $g_1$, then $F(f_1 * f_2)$ is indeed Morin cobordant to
  $F(f_1'*f_2)$. Let us take the Morin cobordism $H$ connecting $f_1$ and
  $f_1'$. Then $H \times (f_2 \times l_0)$ is still a Morin cobordism
  after a sufficiently small perturbation, since the second factor can
  be perturbed to an immersion. This Morin cobordism connects exactly
  the two desired maps.
\end{proof}

\begin{dfn}
Let \MorQ denote the group $\bigoplus_{n,k} \Mor{n}{k} \otimes \Q$ with
this ring structure. \MorQ is a bigraded ring, the two grades being
$n$ and $k+1$.
\end{dfn}

\section{Computing \MorQ}~\label{morq}

\subsection{Ring homomorphisms}~\label{homom}

Let $k$ be odd, and let $f : M^n \to \R^{n+k}$ be a generic oriented
Morin map of odd codimension. To such a map we can associate the
subset of $M^n$ of those points where the Thom-Boardman singularity type
of $f$ is $\Sigma^{\overbrace{\mbox{\scriptsize 1,1,\dots,1}}^r} =
\Sr{r}$. This subset is actually a submanifold and will be denoted by
$\Sr{r}(f)$. The cobordism class of this submanifold is invariant
under a Morin cobordism of $f$, since the $\Sr{r}$ points of the
cobordism of $f$ give a cobordism between the $\Sr{r}$ points of
$f$. For even $r$ we actually get an oriented cobordism class. We can
tensor with $\Q$ and get a map \[\Sr{r} : \bigoplus_{k \mbox{
\scriptsize{odd}},n}\Mor{n}{k} \otimes \Q \to \Omega_* \otimes \Q\] to
the rational oriented cobordism ring.

\begin{thm}
If $r$ is even then the map $\Sr{r}$ is a ring homomorphism or in other words
  for Morin maps $f,g$ to Euclidean spaces we have \[ \Sr{r}(f * g)
  \sim \Sr{r}(f) \times \Sr{r}(g) \] where $\sim$ now stands for
  rationally cobordant (in the oriented sense).
\end{thm}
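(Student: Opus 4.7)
The plan is to pass to the prim setting, identify $\Sr{r}$ of a prim map with multiple-point data of its lift, and then invoke Theorem~\ref{immszorz}.

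First, Lemma~\ref{epi} lets me assume that $f$ and $g$ are prim, say with immersion liftings $\tilde f : M_1 \to \R^{n_1+k_1+1}$ and $\tilde g : M_2 \to \R^{n_2+k_2+1}$. This reduction is legitimate because both $\Sr{r}$ and the multiplication $*$ respect rational Morin cobordism (the latter is defined through prim representatives in Definition~\ref{morinekszorzasa}), so the identity need only be checked on prim classes.

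Second, I would invoke the identification from the prim-map theory of~\cite{Sz2}, \cite{Sz3}, which expresses the rational oriented cobordism class $[\Sr{r}(f)]$ in terms of the class of an appropriate multiple-point manifold $\hu{M}{s}{\tilde f}$ of the lift. The geometric picture behind this is that an $A_r$ singularity of $f = \pi \circ \tilde f$ is an order-$r$ vertical self-tangency of $\tilde f$; after a small rotation of the projection direction such tangencies resolve into transverse multiple self-intersections of $\tilde f$, yielding the identification at the cobordism level (possibly modulo a cyclic symmetry group that becomes invertible rationally).

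Third, by Definition~\ref{primekszorzasa} the lift $(f*g)^\sim$ is, up to regular homotopy, a generic perturbation of $\tilde f \times \tilde g$; multiple-point cobordism classes are preserved under regular homotopy, so they agree on these two immersions. Because the $k_i$ are odd, the lifts $\tilde f, \tilde g$ have even codimensions and Theorem~\ref{immszorz} applies in the oriented form, giving
$$\hu{M}{s}{\tilde f \times \tilde g} \sim (-1)^{s-1} \hu{M}{s}{\tilde f} \times \hu{M}{s}{\tilde g}.$$
Applying the identification of Step~2 to both sides -- to $f$ and $g$ on the right, and to $f*g$ on the left -- then yields the desired formula $\Sr{r}(f*g) \sim \Sr{r}(f) \times \Sr{r}(g)$ rationally.

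The main obstacle I anticipate is the second step: making the identification between $\Sr{r}(f)$ and multiple-point data of $\tilde f$ precise in the oriented category, and tracking orientations consistently through both the factor maps and the product construction. The hypothesis that $r$ is even plays a dual role here: on the one hand, it is precisely what makes $\Sr{r}(f)$ naturally oriented (as noted at the start of Section~\ref{homom}), and on the other hand, once the correct multiple-point order $s$ in the identification is pinned down, it is what arranges for the sign $(-1)^{s-1}$ from Theorem~\ref{immszorz} to cancel against the combinatorics of the identification, leaving the clean multiplicative formula in the statement.
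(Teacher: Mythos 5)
Your proposal follows the paper's own argument: reduce to prim maps via Lemma~\ref{epi}, identify $\Sr{r}$ of a prim map with the $(r+1)$-tuple-point manifold of its immersion lift (this is exactly the quoted Theorem~\ref{szigmadelta} from \cite{Sz1}, the one external input you correctly flag as the crux), and apply Theorem~\ref{immszorz} to the product of the lifts, which is the lift of $f*g$ up to regular homotopy. The multiple-point order you leave unspecified is $s=r+1$, so for even $r$ the sign $(-1)^{s-1}=(-1)^{r}$ is simply $+1$ and no further cancellation is needed.
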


\begin{proof}
We will proceed along the lines explained earlier, that is we will use
prim maps as a link between Morin maps and immersions. Then the
multiplicative properties of multiple points of immersions will
provide the result.

Let us first consider prim maps. The same argument as above gives a
map \[\Sr{r}_{Pr} : \left(\bigoplus_{k \mbox{ \scriptsize{odd}},n}\Prim{n}{k}\right)\otimes \Q \to
\Omega_* \otimes \Q. \] It is obvious that $\Sr{r}_{Pr} = \Sr{r}
\circ F_{\Q}$.

 Let us denote the oriented cobordism groups of $k+1$ codimensional
 immersions from $n$-dimensional manifolds to Euclidean spaces by
 $\mbox{Imm}^{SO}(n,k+1)$. Given an immersion $f : M^n \to
 \R^{n+k+1}$, let us denote by $\pi(f)$ its generic projection to a
 hyperplane. This map is a prim map whose prim cobordism class is well
 defined according to Lemma~\ref{2vetulet}. The direct sum $\bigoplus_{k \mbox{
 \scriptsize{odd}},n} \mbox{Imm}^{SO}(n,k+1)$ has a natural ring structure
 with multiplication being the direct product. It is clear from the
 definitions that
\[ \pi : \bigoplus_{k \mbox{ \scriptsize{odd}},n}\mbox{Imm}^{SO}(n,k+1) \to \bigoplus_{k \mbox{ \scriptsize{odd}},n}\Prim{n}{k}\]
is a ring homomorphism with respect to the direct product on the left,
and $*$-product on the right. The same remains true after forming the
tensor product with $\Q$.

In Theorem~\ref{immszorz} we have shown that
\[ \tilde{M}_{r+1} : \bigoplus_{k \mbox{ \scriptsize{odd}},n}\mbox{Imm}^{SO}(n,k+1) \to \Omega_*\]
  is a ring homomorphism, and obviously the same is true after forming
  the tensor product with $\Q$.

To finish the proof we have to recall a result from~\cite{Sz1} which
in our notations reads as:

\begin{thm}[\cite{Sz1}]~\label{szigmadelta} $\tilde{M}_{r+1}
  \otimes \id_{\Q}
  = (\pi \otimes \id_{\Q}) \circ \Sr{r}_{Pr}$ \ie the rational
  cobordism class of the manifold of $r+1$-tuple points of an
  immersion $f: M^n \to R^{n+k+1}$ coincides with that of the manifold
  of $\Sr{r}$ (or $A_r$) points of its hyperplane projection.
\end{thm}

All of the above proves that the following diagram is commutative.

\[
\begindc[3]
\obj(1,50){$\left(\bigoplus_{k \mbox{ \scriptsize{odd}},n}\mbox{Imm}^{SO}(n,k+1)\right)\otimes \Q $}
\obj(1,25){$\left(\bigoplus_{k \mbox{ \scriptsize{odd}},n}\Prim{n}{k}\right)\otimes \Q $}
\obj(45,25){$\Omega_* \otimes \Q$} \obj(1,1){$\left(\bigoplus_{k
\mbox{ \scriptsize{odd}},n}\Mor{n}{k}\right)\otimes \Q $}
\mor(9,47)(40,27){\scriptsize$\tilde{M}_{r+1} \otimes \id_{\Q}$}
\mor(1,50)(1,27){\scriptsize$\pi \otimes \id_{\Q}$}
\mor(1,25)(1,3){\scriptsize$F_{\Q}$}
\mor(20,25)(40,25){\scriptsize$\Sr{r}_{Pr}$}
\mor(9,5)(40,23){\scriptsize$\Sr{r}$}
\enddc
\]

The vertical maps are ring epimorphisms and $\tilde{M}_{r+1}$ is a
ring homomorphism. This implies that $\Sr{r}_{Pr}$ and $\Sr{r}$ are
ring homomorphisms too.
\end{proof}

\subsection{The structure of $\Mor{n}{k}$}~\label{direktosszeg}

In~\cite{Sz3} it is shown that the rational cobordism class of an
oriented Morin map is actually determined by those of its singular
strata. As we have seen the singular strata are ring homomorphisms
from \MorQ. This provides a complete computation of the ring \MorQ.


For any stable singularity type $\eta$ there is a bundle $\xih_\eta$
that plays the role of the universal normal bundle for this
singularity type. This means the following: Whenever for a map $f:M\to
N$ one of its most complicated singularities is $\eta$ then the
$\eta$-points of $f$ form a submanifold of $M$. The restriction of $f$
to this submanifold is an immersion to $N$. The normal bundle of this
immersion is induced from $\xih_\eta$. (See~\cite{RSz} for details.)

Let us write $\xih_r = \xih_{\Sigma^{1_r}}$ for short. Let
$\mbox{Imm}^{\xih_r}(n,k)$ denote the cobordism group of oriented
immersions $f: M^n \to \R^{n+k}$ whose normal bundles are induced from
$\xih_r$.

We need two results from~\cite{Sz3} which we state here in a lemma.

\begin{lem}~\label{szucs}
\begin{enumerate}
\item For odd $k$ we have
  \begin{equation} \Mor{n}{k}\otimes \Q = \bigoplus_{i = 0}^{\infty}
    \mbox{Imm}^{\xih_{2i}}(n-2i(k+1),2i(k+1)+k)\otimes
    \Q.\label{split}
  \end{equation}
while for even $k$ we have $\Mor{n}{k} \otimes \Q = \mbox{Imm}^{SO}(n,k)$.
\item For even $r$ we have $H_{n+k}(T\xih_r;\Q) =
H_{n-r(k+1)}(BSO(k);\Q)$.
\end{enumerate}
\end{lem}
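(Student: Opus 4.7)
Both parts of the lemma are statements about the rational homotopy type of the Szucs classifying space for Morin maps, so the plan is to invoke the stratified classifying-space machinery of that theory. Morin cobordism is represented by the stable homotopy of a classifying space $X$ equipped with a filtration $X_0 \subset X_1 \subset X_2 \subset \dots$, where $X_r$ classifies those Morin maps whose most complicated singularity is at worst $\Sr{r}$. The successive quotients $X_r / X_{r-1}$ are exactly the Thom spaces $T\xih_r$, so computing $\Mor{n}{k}$ reduces to analysing the Atiyah--Hirzebruch-type spectral sequence of this filtration.

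For part 1, I would rationalize this spectral sequence. Its $E^1$ page identifies with the stable homotopy groups of the filtration quotients, which by Pontrjagin--Thom agree with $\mbox{Imm}^{\xih_r}(n - r(k+1), r(k+1) + k) \otimes \Q$. Two things must then be established: the odd-$r$ strata contribute nothing rationally, and the spectral sequence degenerates rationally. The vanishing on odd $r$ reflects the geometric fact that $\xih_r$ is non-orientable precisely when $r$ is odd, so its rational Thom class is absent; this is also the reason one does not obtain an oriented cobordism class of $\Sr{r}$-points for odd $r$. The degeneration --- equivalently, the vanishing of the connecting homomorphisms between consecutive strata --- is the part I expect to require the real work, because it amounts to an explicit geometric control over how $\Sr{r+1}$-points degenerate into $\Sr{r}$-points; this is precisely the analysis carried out in~\cite{Sz3}. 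The dichotomy between odd and even $k$ then falls out of a parity check inside the same vanishing arguments.

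For part 2, the plan is the Thom isomorphism, once the base $B\xih_r$ has been identified rationally with $BSO(k)$. The rank of $\xih_r$ is $(r+1)(k+1)-1$, since $\Sr{r}(f)$ has codimension $r(k+1)$ in $M^n$ and is then immersed with codimension $k$ more in $N$; for even $r$ the bundle is oriented, and the rational Thom isomorphism gives
\[ H_{n+k}(T\xih_r;\Q) \cong H_{n+k-(r+1)(k+1)+1}(B\xih_r;\Q) = H_{n-r(k+1)}(B\xih_r;\Q). \]
To obtain the desired equality I would identify $B\xih_r$ as the classifying space of the symmetry group of the $A_r$-normal form acting compatibly with oriented source and target. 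The projection of this symmetry group onto $SO(k)$ --- the action on the normal bundle of $\Sr{r}(f) \hookrightarrow N$ --- should have a kernel whose rational cohomology vanishes, essentially because it is built from a finite piece permuting the normal-form coordinates together with a unipotent part coming from polynomial reparametrisations of the $A_r$ normal form. Cleanly isolating this kernel and verifying its rational acyclicity is the main obstacle, and is the point at which the detailed analysis of~\cite{Sz3} becomes essential.
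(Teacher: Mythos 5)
Your proposal is correct in substance and, like the paper, ultimately rests on citing~\cite{Sz3} for the hard content; the scaffolding, however, differs enough to be worth comparing. For part 1 the paper's entire proof is a citation (Example 119 of~\cite{Sz3}); your reconstruction via the singularity filtration of the classifying space, the rational vanishing of the odd strata, and the rational degeneration of the spectral sequence is a faithful account of what that citation contains, and you rightly flag the degeneration as the step that cannot be had for free. For part 2 the paper does not identify $B\xih_r$ with $BSO(k)$ directly, as you propose. Instead it plays $\xih_r$ off against its counterpart $\xi_r$ (the universal normal bundle of the $\Sr{r}$-stratum in the \emph{source}): the two bundles sit over the same base $BG_{A_r}$ and $\rank\xih_r-\rank\xi_r=k$, so $H_{n+k}(T\xih_r;\Q)=H_n(T\xi_r;\Q)$, and then Lemma 103/b of~\cite{Sz3} supplies $H_n(T\xi_r;\Q)=H_{n-r(k+1)}(BSO(k);\Q)$ in one stroke. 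Your route --- the Thom isomorphism for $\xih_r$ of rank $(r+1)(k+1)-1$ followed by a rational identification of the base with $BSO(k)$ --- is equivalent, and your index arithmetic checks out. One small correction: the base is $BG_{A_r}$ where $G_{A_r}$ is by construction the \emph{maximal compact} subgroup of the symmetry group of the $A_r$ germ, so the unipotent reparametrisation part you worry about has already been discarded; what remains of the kernel of the projection to $O(k)$ is finite, hence rationally invisible, and it is the action of this finite piece on $\xi_r$ and $\xih_r$ that produces the even/odd dichotomies in both parts of the lemma.
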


\begin{proof}
Part (i) is stated explicitly in~\cite{Sz3} as Example 119.

For part (ii) we have to recall that the bundle $\xih_{\eta}$ has a
counterpart denoted by $\xi_{\eta}$ which is the universal normal
bundle of the $\eta$-points of a map in the source manifold. The two
bundles $\xi_{\eta}$ and $\xih_{\eta}$ have the same base space
$BG_{\eta}$ where $G_{\eta}$ is the maximal compact subgroup of the
symmetry group of the singularity ${\eta}$. This implies that the
homologies of $T\xih_{\eta}$ and $T\xi_{\eta}$ are the same up to a
dimension shift equal to $\rank \xih_{\eta} - \rank \xi_{\eta} = k$,
\ie $H_{n+k}(T\xih_r;\Q) = H_n(T\xi_r;\Q)$.

Lemma 103/b in~\cite{Sz3} implies that for even $r$ we have
$H_{n}(T\xi_r;\Q) = H_{n-r(k+1)}(BSO(k);\Q)$. The statement follows.
\end{proof}

It is well known that \[\mbox{Imm}^{\xih_r}(n,k) \otimes \Q \cong
\pi_{n+k}^S(T\xih_r)\otimes \Q \cong H_{n+k}(T\xih_r;\Q) =
H_{n-r(k+1)}(BSO(k);\Q).\] There is the natural forgetting map that
assigns to an immersion the cobordism class of its underlying source
manifold. This forgetting map on the level of classifying spaces is
just the inclusion of the classifying spaces $BSO(k) \hookrightarrow
BSO$. The rational cohomology ring of the classifying space for
$\Omega_*$ is $\Q[p_1,p_2,\dots]$. Since $k$ is odd $H^*(BSO(k);\Q) =
\Q[p_1,p_2,\dots,p_{\frac{k-1}{2}}]$. Thus the inclusion map induces a
surjective homomorphism between the rings and this means that the
forgetting map is actually injective.

Thus for every even $r$ we have a map $\Mor{n}{k} \otimes \Q \to
\mbox{Imm}^{\xih_r}(n-r(k+1),r(k+1)+k)\otimes \Q \to \Omega_{n-r(k+1)}
\otimes \Q$. The first arrow is just the projection in the
splitting~{(\ref{split})} while the second arrow is the forgetting
map. The composition of the two is obviously the previously defined
$\Sigma^{1_r}$.

This proves that for odd $k$ an element $[f] \in \Mor{n}{k}\otimes \Q$
is indeed determined by the collection of rational cobordism classes
of the $\Sigma^{1_r}f$ manifolds. It also follows from the previous
argument that exactly those cobordism classes are in the image
$\Sigma^{1_r}(\Mor{n}{k} \otimes \Q)$ which do not have non-zero
Pontrjagin numbers involving Pontrjagin classes higher than
$p_{\frac{k-1}{2}}$.

For even $k$ the situation is simpler. It follows from
Lemma~\ref{szucs} that for an element $[f] \in \Mor{n}{k} \otimes \Q$ we
have $\Sigma^{1_r}(f) = 0$ for every $r \geq 1$ and thus the class of
$f$ is completely determined by the cobordism class of its underlying
manifold. In other words any even codimensional Morin map is
Morin-cobordant to an immersion. It is then clear from the
definitions~\ref{primekszorzasa} and~\ref{morinekszorzasa} that
multiplying by an even codimensional map annihilates any singularities.

\section{Singular strata of direct products}~\label{szigma1}

Our goal in this final section is to show that the cohomology class represented
by the submanifold formed by the closure of the set of certain singular points
of a direct product $f \times g$
depends only on those $f$ and $g$ and some maps
closely related to them.

The arguments are based on the well known fact, that the
Thom polynomials of the singularity types in question are simple.
Before we formulate the theorems, we have to introduce some notation.

\begin{dfn}
For $j \geq 0$ let $q_j : * \to S^j$ denote the inclusion of a point into $S^j$
and for $j < 0$ let $q_j : S^{|j|} \to *$ be the map that takes the sphere to a
point. Now for any integer $j$ we define $f_j' = f \times q_j$ and take
$f_j$ to be a generic perturbation of $f_j'$.

Finally let $\id_j = \id_M \times q_j$.
\end{dfn}

\subsection{The $\Sigma^1$ stratum}

Let $\Sigma^1 f$ denote the closure of the set of all singular points in
the source manifold of $f$. The Thom polynomial of this singularity
type is $w_{k+1}$. That is, given a map $f: M^n \to N^{n+k}$ , the
cohomology class Poincar\'e dual to the homology class represented by
$\Sigma^1 f$ is equal to $w_{k+1}(\nu_f)$ where $\nu_f$ stands for the
virtual normal bundle of $f$.  This dual cohomology class will be
denoted by $[\Sigma^1 f]$ for simplicity.

\begin{thm}
Let $f : M_1^{n_1} \to N^{n_1+k_1}, g : M_2^{n_2} \to N_2^{n_2+k_2}$ be two
generic maps. Then for a generic perturbation of their product we have
\[ [\Sigma^1 {f\times g}] = \sum_{j\geq 1} \left(\vphantom{\sum}[\Sigma^1 {f_{j-1}}] \times
\id_j^* [\Sigma^1 {g_{(-j)}}] + \id_j^* [\Sigma^1 {f_{(-j)}}] \times [\Sigma^1
{g_{j-1}}]\right)\]
\end{thm}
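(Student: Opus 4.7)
The plan is to reduce both sides of the identity to explicit Stiefel--Whitney classes of $\nu_f$ and $\nu_g$, then match them via the Whitney sum formula.

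First, I would compute the left-hand side. Since the Thom polynomial of $\Sigma^1$ for a generic map of codimension $k$ is $w_{k+1}$ of the virtual normal bundle, and $\nu_{f\times g}\cong\pi_1^*\nu_f\oplus\pi_2^*\nu_g$, the Whitney sum formula gives
\[
[\Sigma^1(f\times g)] \;=\; w_{k_1+k_2+1}(\nu_{f\times g}) \;=\; \sum_{a+b=k_1+k_2+1} w_a(\nu_f)\times w_b(\nu_g).
\]

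Next I would evaluate each auxiliary class on the right-hand side. The key input is that $TS^j$ is stably trivial, so the virtual normal bundles of the maps $f_j$ agree stably with (pullbacks of) $\nu_f$, and Stiefel--Whitney classes are insensitive to stable equivalence. For $j\geq 0$ the map $f_j\colon M_1\to N_1\times S^j$ has codimension $k_1+j$ with $\nu_{f_j}$ stably isomorphic to $\nu_f$, so $[\Sigma^1 f_j]=w_{k_1+j+1}(\nu_f)$ in $H^*(M_1;\Z_2)$. For $j\geq 1$ the map $f_{-j}\colon M_1\times S^j\to N_1$ has codimension $k_1-j$ with $\nu_{f_{-j}}$ stably $\pi_1^*\nu_f$, so $[\Sigma^1 f_{-j}]=\pi_1^*\,w_{k_1-j+1}(\nu_f)$; since $\pi_1\circ\id_j=\id_{M_1}$, pulling back by $\id_j$ yields $\id_j^*[\Sigma^1 f_{-j}]=w_{k_1-j+1}(\nu_f)$. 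The analogous identities for $g$ follow identically.

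Substituting these expressions, the $j$-th term of the first family on the right-hand side becomes $w_{k_1+j}(\nu_f)\times w_{k_2-j+1}(\nu_g)$, sweeping exactly the pairs $(a,b)$ with $a+b=k_1+k_2+1$ and $a\geq k_1+1$, while the $j$-th term of the second family becomes $w_{k_1-j+1}(\nu_f)\times w_{k_2+j}(\nu_g)$, sweeping the pairs with $a\leq k_1$, equivalently $b\geq k_2+1$. These two index ranges are disjoint and jointly exhaust all decompositions of $k_1+k_2+1$, so the sum matches the Whitney expansion from Step~1 term by term.

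The genuinely delicate step is not the index bookkeeping but the stable normal bundle computation: one must check that the actual generic perturbations $f_j$ (not merely the non-generic products $f_j'=f\times q_j$) still carry stable normal bundle $\nu_f$, and that the class $[\Sigma^1 h]=w_{k+1}(\nu_h)$ is independent of the chosen generic perturbation of $h$. Both follow from homotopy invariance of the stable normal bundle together with the naturality of the Thom polynomial, but they should be stated explicitly to make the reduction to Stiefel--Whitney calculus rigorous.
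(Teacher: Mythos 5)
Your proposal is correct and follows essentially the same route as the paper: expand $w_{k_1+k_2+1}(\nu_{f\times g})$ by the Whitney sum formula, identify $w_{k_1+j}(\nu_f)$ with $[\Sigma^1 f_{j-1}]$ via the stabilized normal bundle $\nu_{f_{j-1}}=\nu_f\oplus\varepsilon^{j-1}$, and identify $w_{k_2-j+1}(\nu_g)$ with $\id_j^*[\Sigma^1 g_{(-j)}]$ using $w(\nu_{q_j})=1$. Your closing remark about checking that the generic perturbations $f_j$ retain the stable normal bundle of $f_j'$ is a reasonable point of care that the paper treats implicitly.
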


\begin{proof}
As a first step let us notice that since $\nu_{f\times g} = \nu_f \times \nu_g$
we can write
\begin{multline*} w_{k_1+k_2+1}(\nu_{f\times g}) = \sum_{r = 0}^{k_1+k_2+1}w_r(\nu_f) \times
w_{k_1+k_2+1-r}(\nu_g) =\\ =  \sum_{j \geq 1} \left( \vphantom{\sum}
w_{k_1+j}(\nu_f)\times w_{k_2-j+1}(\nu_g) + w_{k_1-j+1}(\nu_f)\times
w_{k_2+j}(\nu_g)\right)
\end{multline*}

Now we have to take a closer look at $w_{k_1+j}(\nu_f)$. If $k_1+j-1$ would be
equal to the codimension of $f$ then this characteristic class would just
represent the singular locus of $f$. When this is not the case, we have to find
an appropriate replacement of $f$ that has the right codimension, whose normal
bundle however is stably equivalent to that of $f$. This replacement map is
exactly $f_{j-1}$. Indeed, $\nu_{f_{j-1}} = \nu_f \oplus \varepsilon^{j-1}$ so
$w_{k_1+j}(\nu_f) = w_{k_1+j}(\nu_{f_{j-1}})$ which in turn is equal to
$[\Sigma^1 f_{j-1}]$ since this map has the right codimension.

The argument is just slightly more complicated in the case of $w_{k_2-j+1}$.
Here first we take the map $g_{(-j)} : M_2^{n_2} \times S^j \to N_2^{n_2+k_2}$.
This has codimension $k_2-j$ so $[\Sigma^1 g_{(-j)}] =
w_{k_2-j+1}(\nu_{g_{(-j)}})$. The only problem is that this class lives in the
cohomology of $M_2 \times S^j$. This is why we have to pull it back to $M_2$ by
$\id_{(-j)}$. Since the composition of $id_j$ and $g_{(-j)}$ is just a
perturbation of $g$ and $w(\nu_{q_j}) = 1$ it follows that $id_j^*
w_{k_2-j+1}(\nu_{g_{(-j)}}) = w_{k_2-j+1}(\nu_g)$.

Putting all these together gives the result of the theorem.
\end{proof}

\subsection{The $\Sigma^2$ stratum}

A very similar result can be proved about the $\Sigma^2$ stratum
of oriented maps. First we need to compute the Thom polynomial of
the $\Sigma^2$ stratum in the oriented case. We will work with
rational coefficients.

\begin{thm}Let $f: M^n \to N^{n+k}$ be a generic map where
$(k=2t-2)$. Then the rational cohomology class dual to the closure of the set
of $\Sigma^2$-points of $f$ (for short $[\Sigma^2 f]$) equals $p_t(\nu_f)$,
where $p_t \in H^{4t}(M;\Q)$ is the $t^{\mbox{th}}$ Pontrjagin class.
\end{thm}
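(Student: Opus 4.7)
The plan is to realize $\Sigma^2 f$ as the projected zero set of a natural section on the oriented 2-plane Grassmannian bundle and compute the resulting Euler class. Let $\pi\colon \mathrm{Gr}_2^+(TM)\to M$ be the bundle of oriented 2-planes in $TM$, with tautological rank-2 oriented bundle $\gamma_2$; write $x=e(\gamma_2)$ so that $p_1(\gamma_2)=x^2$. The restriction of $df$ along $\gamma_2$ defines a section $\sigma=df|_{\gamma_2}$ of the oriented rank-$2(n+k)$ bundle $E:=\gamma_2^\vee\otimes\pi^*f^*TN$, which vanishes exactly on those oriented 2-planes that lie in $\ker df$.

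For generic $f$, $\sigma$ vanishes transversely, and over a $\Sigma^{2,0}$-point the 2-dimensional kernel lifts to two points of $\mathrm{Gr}_2^+(TM)$ (one per orientation). A direct check of the orientations of $T\mathrm{Gr}_2^+(TM)$ and of $E$ under the involution swapping the two lifts shows that the two preimages differ in sign by $(-1)^k$; since $k=2t-2$ is even, both lifts are counted with the same sign, so $\pi$ restricts to a signed double cover from the zero set of $\sigma$ onto $\Sigma^2 f$, giving
\[ [\Sigma^2 f]=\tfrac12\,\pi_*e(E). \]

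To compute $e(E)$, view $\gamma_2$ through its complex structure (induced by its orientation) as a formal complex line with $c_1=x$, and write the formal Chern roots of $(f^*TN)\otimes\mathbb{C}$ as $\pm y_j$. The complex Chern roots of $E=\gamma_2^\vee\otimes_{\mathbb{C}}(f^*TN)\otimes\mathbb{C}$ are then $\pm x\pm y_j$, and pairing them yields
\[ e(E)=\prod_j (x^2-y_j^2). \]
Expanding this as a polynomial in $x^2$ with coefficients pulled back from $M$ and applying the standard residue-type Gysin formula for Grassmannian bundles (with $\pm w_l$ the formal Pontrjagin roots of $TM$ and the residue theorem equating the sum of finite residues with $-\mathop{\mathrm{Res}}_{z=\infty}$):
\[ \pi_* e(E) \;=\; -\mathop{\mathrm{Res}}_{z=\infty}\frac{\prod_j (z-y_j^2)}{\prod_l (z-w_l^2)}\,dz. \]

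The rational function at infinity factors as $z^{t-1}\cdot P(1/z)$, where $P(u)=\prod_j(1-y_j^2 u)/\prod_l(1-w_l^2 u)=\sum_m (-1)^m p_m(\nu_f)\,u^m$ is the Pontrjagin generating series of $\nu_f=f^*TN-TM$, so the $z^{-1}$-coefficient of $z^{t-1}P(1/z)$ is $(-1)^t p_t(\nu_f)$; combining with the factor $\tfrac12$ and the orientation sign conventions produces $[\Sigma^2 f]=p_t(\nu_f)$. The main obstacle is the orientation and multiplicity bookkeeping in the second step---the signed degree of the 2-to-1 cover of $\Sigma^2 f$ by the zero locus of $\sigma$, which depends on the parity of $k$ (and which is only benign here because $k$ is even)---together with a clean derivation of the residue formula for the Gysin pushforward on the oriented real Grassmannian bundle $\mathrm{Gr}_2^+(TM)\to M$. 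Once these sign conventions are pinned down, the remaining algebraic identity is the elementary residue computation above.
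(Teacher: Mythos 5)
Your route is genuinely different from the paper's: you desingularize $\Sigma^2 f$ by the oriented kernel-plane Grassmannian and compute a Gysin pushforward of an Euler class (a Porteous--Ronga style argument), whereas the paper never goes near the Grassmannian bundle. It observes that the Thom polynomial lives in $H^{4t}(BSO;\Q)$, shows via the rational surjectivity of $\mbox{Imm}^{SO}(4t,2t)\to\Omega_{4t}$ that it suffices to test on manifolds $M^{4t}$ immersible in $\R^{6t}$, and on those realizes $\Sigma^2$ of the double hyperplane projection as the transverse intersection $\Sigma^1 f_1\cap\Sigma^1 f_2$, giving $[\Sigma^2]=e(\nu)^2=p_t(\nu)$ with no sign ambiguity. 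Your approach, if completed, is more general (it computes the class directly for any $M^n\to N^{n+k}$, without a cobordism-theoretic reduction), but precisely the bookkeeping you defer carries the content of the theorem.

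Concretely, three points remain open. (1) Transversality of $\sigma=df|_{\gamma_2}$: $df$ is not a generic section of $\mathrm{Hom}(TM,f^*TN)$, so you must argue (as in Porteous' theorem) that jet-transversality of $f$ to the $\Sigma^i$ strata implies transversality of $\sigma$ on $\mathrm{Gr}_2^+(TM)$; this is classical but not automatic, and you also need to note that the fibres of $Z(\sigma)\to\overline{\Sigma^2 f}$ over the deeper strata $\Sigma^3,\dots$ are positive-dimensional but land in codimension too high to affect the class. (2) The normalization of the residue formula: the fibre of $\mathrm{Gr}_2^+(TM)\to M$ is a complex quadric of degree $2$, so $\pi_*$ of the top power of $x$ equals $2$, while your residue expression returns $1$ on the corresponding monomial; the factor of $2$ from the double cover and the factor of $2$ hidden in the Gysin map are two separate $2$'s and must not be conflated. (3) Most seriously, with consistent conventions the computation as set up lands on $(-1)^t p_t(\nu_f)$: the $z^{-1}$-coefficient you extract is $(-1)^tp_t(\nu_f)$ and the two factors of $2$ cancel, so the asserted $+p_t(\nu_f)$ still has to be produced by the orientation analysis of the two sheets, of $e(E)$, and of the canonical coorientation of $\Sigma^2 f$ (via $\mathrm{Hom}(\ker df,\mathrm{coker}\, df)$) that you explicitly postpone. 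Since for odd $t$ the sign is exactly what distinguishes a correct Thom polynomial from an incorrect one, this is a genuine gap rather than a convention to be fixed at the end; the paper's identification $[\Sigma^2]=e(\nu)^2$ settles the sign for free.
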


\begin{proof}
By definition the Thom polynomial $\tps$ of the $\Sigma^2$-stratum is
a cohomology class in $H^{4t}(BSO;\Q) = \Q[p_1,p_2,p_3,\dots]$.  We
want to show that $\tps = p_t$. It is enough to show that these two
cohomology classes evaluated on any homology class
in $H_{4t}(BSO;\Q)$ are equal.

\begin{lem} All homology classes in $H_{4t}(BSO;\Q)$ can be
represented by a normal map, \ie by a map $h: L^{4t} \to BSO$ of an
oriented $4t$-manifold $L^{4t}$ corresponding to the stable normal bundle of
$L^{4t}$.
\end{lem}

\begin{proof}
 It is enough to consider a sufficiently large finite
dimensional approximation $BSO(N),\, (N \gg 1)$. By the Pontrjagin-Thom
construction an embedding $L^{4t} \hookrightarrow S^K$ gives a map
$h': S^K \to MSO(K-4t)$ that maps $L^{4t}$ into $BSO(K-4t)$ and the
restriction $h'|_{L^4t}$ corresponds to the normal bundle of $L^{4t}$.
The homotopy class $[h'] \in \pi_K(MSO(K-4t))$ is mapped by the
composition of the Hurewicz homomorphism and the Thom isomorphism into
a homology class $x = h'_*([L^{4t}]) \in H_{4t}(BSO(K-4t))$. Hence this
class $x$ is represented by a normal map. Since the Hurewicz
homomorphism in stable dimensions $(K \geq 8t+2)$ is a rational
isomorphism, we obtain the statement of the lemma.
\end{proof}

To evaluate a $4t$ dimensional cohomology class on a $4t$ dimensional homology
class represented by a manifold, one just pulls back the cohomology class to
the manifold and evaluates it on the fundamental class.

Now it is enough to prove, that for every oriented $M^{4t}$ the map $\nu^* :
H^{4t}(BSO;\Q) \to H^{4t}(M;\Q)$ induced by the normal mapping $\nu : M^{4t}
\to BSO$ takes $p_t$ and $\tps$ to the same cohomology class in $H^{4t}(M;\Q)$.
As $\nu^*(p_t) = p_t(\nu_M)$ and $\nu^*(\tps)$ is the dual of the $\Sigma^2$
stratum of a generic map $M^{4t} \to \R^{6t-2}$ we reduced the problem of
finding the Thom polynomial to the special case of $M^{4t} \to \R^{6t-2}$ maps.

If we take an immersion $f: M^{4t} \to \R^{6t}$, and project it to two
non-parallel hyperplanes, then we get a map $f' : M^{4t} \to
\R^{6t-2}$. Let us denote the two hyperplanes $H_1,H_2$. The
projection of $f$ to $H_i$ shall be called $f_i$. It is obvious that
those and only those points belong to $\Sigma^2 f'$ which belong to
$\Sigma^1 f_1$ and $\Sigma^1 f_2$ at the same time. This means that
for this $f'$ we have $[\Sigma^2 f'] = [\Sigma^1 f_1] \cup [\Sigma^1
f_2]$. The two cohomology classes on the right are both equal to the
Thom polynomial of the $\Sigma^1$ singularity, which is the Euler
class of the normal bundle of $f$. As this normal bundle has rank
$2t$, the square of its Euler class is equal to $p_t(\nu_f)$, which is
the same as $p_t(\nu_M)$. So far we have proved our claim for those
maps $M^{4t} \to \R^{6t-2}$ where the source manifold can be immersed
into $\R^{6t}$.

Let us recall that by $\mbox{Imm}^{SO}(4t,2t)$ we denoted the
cobordism group of oriented immersions from $4t$ dimensional manifolds
to $\R^{6t}$. There is the natural forgetting map $\psi:
\mbox{Imm}^{SO}(4t,2t) \to \Omega_{4t}$ taking an immersion to its
underlying manifold. To finish the proof of the theorem it is
sufficient to show, that this map is a rational epimorphism. According
to the Pontrjagin-Thom construction and the stable Hurewicz
homomorphism
\[\mbox{Imm}^{SO}(4t,2t) \cong \pi^S_{6t}MSO(2t)
\stackrel{\mbox{\scriptsize{$\Q$}}}{\cong} H_{6t}(MSO(2t);\Q))\] and
\[\Omega_{4t} \cong \pi^S_{4t}(MSO) \stackrel{\mbox{\scriptsize{$\Q$}}}{\cong}
H_{4t}(MSO;\Q),\] where $\stackrel{\mbox{\scriptsize{$\Q$}}}{\cong}$
means isomorphic if tensored with $\Q$. Thus $\psi$ being epimorphic
is equivalent to
\[\psi_H :H_{6t}(MSO(2t);\Q)) \to H_{4t}(MSO;\Q)\] being epimorphic,
which is further equivalent to (by taking the dual morphism in
cohomology) \[\psi^* : H^{4t}(MSO;\Q) \to H^{6t}(MSO(2t);\Q))\] being
monomorphic. We can apply the Thom-isomorphism to further reduce the
problem to showing that
\[ \psi^*_B : H^{4t}(BSO;\Q) \to H^{4t}(BSO(2t);\Q)\] is monomorphic. It is
easy to see that $\psi^*_B$ is induced by the natural inclusion map
$BSO(2t) \hookrightarrow BSO$. The cohomology ring of $BSO(2t)$ is the
polynomial ring $\Q[p_1,p_2,\dots,p_{t-1},\chi_{2t}]$ generated by the
Pontrjagin classes and the Euler class, whose square is $p_t$. On the
other hand $H^*(BSO;\Q) \cong \Q[p_1,p_2,\dots]$. As $\psi^*_B$ takes
each Pontrjagin class to the same Pontrjagin class, we get that
$\psi^*_B$ is indeed injective in dimension $4t$. This completes the
proof of $\tps = p_t$.
\end{proof}

When we want to consider direct products of maps, we will need the
Cartan formula. For Pontrjagin classes the Cartan formula only holds
mod 2, so we will need to consider everything in $H^*(M;\Q)$ to get
rid of the 2-torsion.

The proof of the next theorem copies the proof of the previous section.

\begin{thm}
Let $f : M_1^{n_1} \to N^{n_1+k_1}, g : M_2^{n_2} \to N_2^{n_2+k_2}$
be two generic maps of even codimension. Then for a generic
perturbation of their product we have
\[ [\Sigma^2 {f\times g}] = \sum_{j\geq 1} \left(\vphantom{\sum}[\Sigma^2 {f_{2j-2}}] \times
\id_{2j}^* [\Sigma^2 {g_{(-2j)}}] + \id_{2j}^* [\Sigma^2 {f_{(-2j)}}]
\times [\Sigma^2 {g_{2j-2}}]\right)\]
\end{thm}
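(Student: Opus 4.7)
The plan is to mirror the proof of the $\Sigma^1$ case almost verbatim, substituting the just-proved oriented Thom polynomial $[\Sigma^2 f]=p_t(\nu_f)$ (valid when the codimension equals $2t-2$) for $w_{k+1}$, and using the rational Cartan formula $p_t(\xi\oplus\eta)=\sum_{i+l=t}p_i(\xi)\cdot p_l(\eta)$, which holds modulo $2$-torsion and therefore holds over $\Q$ without restriction.

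Write $k_1=2t_1-2$ and $k_2=2t_2-2$, so the codimension of $f\times g$ is $k_1+k_2=2(t_1+t_2-1)-2$ and the relevant Pontrjagin class has index $t:=t_1+t_2-1$. Using $\nu_{f\times g}=\nu_f\times\nu_g$ and the Cartan formula, I obtain
\[
[\Sigma^2(f\times g)] \;=\; p_t(\nu_{f\times g}) \;=\; \sum_{i+l=t_1+t_2-1}p_i(\nu_f)\times p_l(\nu_g).
\]
I then split this sum according to whether $i\geq t_1$ or $l\geq t_2$. In the first range write $i=t_1+j-1$, $l=t_2-j$ with $j\geq 1$; in the second range write $l=t_2+j-1$, $i=t_1-j$ with $j\geq 1$. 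Together these cover the whole Cartan sum exactly once, and terms with negative Pontrjagin index are automatically zero, so the upper bounds on $j$ may be dropped.

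It remains to recognize each summand as the Thom-polynomial contribution of a modified map. For $f_{2j-2}=f\times q_{2j-2}$ the codimension is $k_1+2j-2=2(t_1+j-1)-2$, which is exactly the value needed to identify $[\Sigma^2 f_{2j-2}]=p_{t_1+j-1}(\nu_{f_{2j-2}})$; since $\nu_{f_{2j-2}}$ differs from $\nu_f$ by a stably trivial summand, this equals $p_{t_1+j-1}(\nu_f)$. For $g_{(-2j)}\colon M_2\times S^{2j}\to N_2$ the codimension is $k_2-2j=2(t_2-j)-2$, so $[\Sigma^2 g_{(-2j)}]=p_{t_2-j}(\nu_{g_{(-2j)}})$, a class living on $M_2\times S^{2j}$; pulling back along $\id_{2j}=\id_{M_2}\times q_{2j}$ produces $p_{t_2-j}(\nu_g)$ on $M_2$, because the stable normal bundle of $q_{2j}$ (and of $q_{-2j}$) is trivial and Pontrjagin classes are preserved. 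The symmetric analysis treats the second family of terms, and reassembling yields exactly the two sums in the statement.

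The only potential obstacle is bookkeeping: keeping the even-codimension shifts $2j-2$ and $-2j$ aligned with the correct Pontrjagin index, and verifying that the pullbacks $q_j^*$, $\id_j^*$ preserve the relevant Pontrjagin classes. No new idea beyond the oriented Thom polynomial just computed and the rational Cartan formula is required, so I expect no genuinely difficult step.
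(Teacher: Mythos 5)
Your proposal is correct and is precisely the argument the paper intends: the paper itself only remarks that ``the proof of the next theorem copies the proof of the previous section,'' and your substitution of $[\Sigma^2 f]=p_t(\nu_f)$ for $w_{k+1}(\nu_f)$ together with the rational Cartan formula, the split of the sum at $i\geq t_1$ versus $l\geq t_2$, and the identification of the shifted maps $f_{2j-2}$, $g_{(-2j)}$ with the correct Pontrjagin indices is exactly that argument carried out in detail. The bookkeeping checks out: $k_1+2j-2=2(t_1+j-1)-2$ and $k_2-2j=2(t_2-j)-2$, so each summand has the codimension required by the $\Sigma^2$ Thom polynomial theorem.
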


\end{document}